 \newtheorem{theorem}{Theorem}[section]
 \newtheorem{definition}[theorem]{Definition}
 \newtheorem{proposition}[theorem]{Proposition}
 \newtheorem{lemma}[theorem]{Lemma}
 \newtheorem{remark}[theorem]{Remark}
 \newtheorem{example}[theorem]{Example}
 \newtheorem{conjecture}[theorem]{Conjecture}
\title{On a topological Erd\H{o}s similarity problem}
\author{John Gallagher}
\address{San Francisco State University, Department of Mathematics, 1600 Holloway Avenue, CA 94132, US}
\email{jgallagher1@mail.sfsu.edu}
\author{Chun-Kit Lai}
\address{ San Francisco State University, Department of Mathematics, 1600 Holloway Avenue, CA 94132, US}
\email{cklai@sfsu.edu}
\author{Eric Weber}
\address{ Iowa State University, Department of Mathematics, 396 Carver Hall, 411 Morrill Rd., Ames, IA, 50011}
\email{esweber@iastate.edu}
\date{\today}
\newcommand{\Z}{\mathbb{Z}}
\newcommand{\Q}{\mathbb{Q}}
\newcommand{\R}{\mathbb{R}}
\subjclass[2020]{28A80}
\keywords{Baire Category theorem, Cantor sets, topologically universal, measure universal}
\begin{document}
\maketitle

\begin{abstract}
     A pattern is called universal in another collection of sets, when every set in the collection contains some linear and translated copy of the original pattern.  Paul Erd\H{o}s proposed a conjecture that no infinite set is universal in the  collection of sets with positive measure.  This paper explores an analogous problem in the topological setting. Instead of sets with positive measure, we investigate the collection of dense $G_{\delta}$ sets and in the collection of generic sets (dense $G_{\delta}$ and complement has Lebesgue measure zero). We refer to such pattern as topologically universal and generically universal respectively.  It  is  easy to show that any countable set is topologically universal, while any set containing an interior cannot be topologically universal. In this paper, we will show that Cantor sets on $\R^d$ are not topologically universal and Cantor sets  with positive Newhouse thickness on $\R^1$ are not generically universal.  This gives a positive partial answer to a  question by Svetic concerning the Erd\H{o}s similarity problem on Cantor sets. Moreover, we also obtain a higher dimensional generalization of the generic universality problem. 

\end{abstract}

\section{Introduction}


A question that frequently arises has the following generic form: Does every ``large'' (or unstructured) set possess a ``copy'' of a ``small'' (or structured) set?  For example, Erd\H{o}s and Tur\'an conjectured that every $X\subset {\mathbb N}$ of positive density (large, unstructured) contains a copy of the set $\{1,2,...,n\}$ (small, structured) in the form of an arithmetic progression. The conjecture was famously proven true by Szemer\'edi \cite{Sze75,AC_Tao}.  In a similar vein, Steinhaus proved that the difference set of a set of positive measure in $\mathbb{R}$ (large) contains a scaled copy of the interval $(-1,1)$ (small), \cite{Steinhaus}.

The words ``large'', ``small'', and ``copy'' can take on multiple forms, so we begin by defining some of our terms.

\begin{definition}
Let $E\subset{\mathbb R}^d$ be a set and let ${\mathcal X}$ be a collection of subsets in ${\mathbb R}^d$. 
\begin{enumerate}
\item  An {\bf affine copy} of  $E$ is a copy of the form $t+ T(E)$ where $t\in{\mathbb R}^d$ and $T$ is an invertible linear transformation on ${\mathbb R}^d$. A {\bf similar copy} of $E$ is an affine copy such that $T = \lambda O$ where $\lambda>0$ is a scalar and $O$ is an orthogonal transformation. 

\medskip

\item We say that $E$ is {\bf universal in ${\mathcal X}$} if for every $K\in {\mathcal X}$, there exists an affine copy of $E$, $t+T(E)$, such that $t+T(E)\subset K$. 

\medskip

\item We say that $E$ is {\bf measure-universal} if $E$ is universal in ${\mathcal X}$, where ${\mathcal X}$ is taken to be the collection of all Lebesgue measurable set with positive Lebesgue measure. 
\end{enumerate}
\end{definition}

\medskip

In one dimension, affine copies and similar copies coincide and they are of the form $t+\lambda E$ where $t\in{\mathbb R}$ and $\lambda\ne 0$.  Many problems in mathematics can be formulated in terms of universality. 
Szemer\'edi's theorem then can be stated as: the set $\{1,2,...,n\}$ is universal in the collection of sets of positive density in ${\mathbb N}$. The Toeplitz square peg problem asserted that every Jordan curve admits four points on the curve forming a square. Formulated in our notation and interpreting universality in terms of similarity copy, it means that the unit square corners $\{(0,0), (1,0), (0,1), (1,1)\}$ is universal in the collection of all Jordan curves. The problem was recently solved for smooth Jordan curves \cite{Square-Peg}. 

\medskip

Our notion of universality was first introduced by Kolountzakis \cite{Kolo}, in which the goal was to study the famous Erd\H{o}s similarity conjecture. 

\medskip

{\bf Conjecture (Erd\H{o}s):} There is no set of infinite cardinality that is measure-universal.

\medskip

Steinhaus \cite{Steinhaus} first showed that finite sets are measure-universal. This motivated Paul Erd\H{o}s to pose the conjecture back in 1974 and he offered \$100 for solving this problem. The conjecture is still open; for a survey of the problem, one can refer to \cite{Svetic}. Let us summarize some progress here. With a simple observation, we can see that the conjecture can be resolved in its full generality if we can show that all positive decreasing sequences whose limit is zero are not measure-universal.   Falconer \cite{Falconer} made a substantial progress by showing that slowly decaying sequences $\{x_n: n\in{\mathbb N}\}$ in the sense that 
$$
\liminf_{n\to\infty} \frac{x_{n}}{x_{n+1}}=1
$$
are not measure-universal.  Bourgain \cite{Bourgain} demonstrated that the sum-set $S_1+S_2+S_3$ of any three infinite sets $S_1,S_2,S_3$ cannot be measure-universal. Kolountzakis \cite{Kolo} demonstrated using probabilistic arguments that certain set with large gaps cannot be measure-universal.  Currently it is still an open question whether or not exponential decaying sequences such as $\{2^{-n}\}$ are measure-universal. Cruz, the second-named author and Pramanik recently constructed a Cantor set $K$ such that the set of Erd\H{o}s points in $K$, i.e.
$$
\{x\in K: \forall\delta\ne 0, \ x+\delta \{2^{-n}\}\not\subset K\},
$$
has Hausdorff dimension 1.  If one could show that the above set could be of positive Lebesgue measure, the Erd\H{o}s similarity problem will be solved for $\{2^{-n}\}$. Their result also works on sequences which do not reach super-exponential decay \cite{CLP2022}.

\subsection{Main Results.} The main purpose of this paper is to study a topological version of the Erd\H{o}s similarity problem. If we regard a set of positive Lebesgue measure as measure-theoretically large, then a dense $G_{\delta}$ set will be regarded as topologically large. Recall that a $G_{\delta}$ set is a set $G$ that can be written as countable intersection of open sets. If each open set is dense in $\R^d$, then the well-known Baire Category theorem shows that $G$ is a dense and uncountable set.  There is no relation between sets with positive Lebesgue measure and  dense $G_{\delta}$ sets. A fat Cantor set has positive Lebesgue measure, but is nowhere dense. On the other hand, the set of all Liouville's numbers is a dense $G_{\delta}$ but with Lebesgue measure zero (Hausdorff dimension zero indeed).

\medskip

\begin{definition}
\begin{enumerate}
    \item We say that a set $E\subset {\mathbb R}^d$ is {\bf topologically universal } if $E$ is universal in the collection of all dense $G_{\delta}$ sets in ${\mathbb R}^d$.
    
    \medskip
    
    \item We say that a set $E\subset {\mathbb R}^d$ is {\bf generically universal } if $E$ is universal in the collection of all dense $G_{\delta}$ sets $G$ such that $m({\mathbb R}^d\setminus G) = 0$ in ${\mathbb R}^d$ (Here $m$ denotes the Lebesgue measure).
\end{enumerate}

\end{definition}

In the first definition, we are interested in what set is universal for topologically large sets. In the second definition, we notice that $m(G) = \infty$, so we are interested in what set is universal for both measure-theoretically and  topologically large sets (such sets are sometimes referred to as {\it generic sets}, which is the reason why we choose this definition). 

\medskip

It is a simple observation from the Baire Category theorem that all countable sets are topologically universal. On the other hand, a set containing an interior point cannot be topologically or generically universal because there are dense $G_{\delta}$ sets with full Lebesgue measures with empty interior. As any affine copy of a set with interior must have interior, a dense $G_{\delta}$ set with empty interior  cannot contain any such affine copy. Hence, our focus will be on whether nowhere dense sets are topologically universal. Let us first make precise the meaning of Cantor set in our setting. 

\medskip
\begin{definition}
$E$ is a Cantor set in ${\mathbb R}^d$ if it is a totally disconnected, perfect and compact subset of ${\mathbb R}^d$.  
\end{definition}

\medskip

Because of the existence of $G_{\delta}$ sets that have Hausdorff dimension zero, by monotonicity of Hausdorff measures, all sets of positive Hausdorff dimension cannot be contained inside such a $G_{\delta}$ set, and hence sets of positive Hausdorff dimension are topologically non-universal. Our first theorem is to show that by considering arbitrary dimension functions, no Cantor sets are topologically universal.

\medskip

\begin{theorem}\label{theorem_invisible}
    For any Cantor set $E\subset \R^d$, there exists a dense $G_{\delta}$ set such that it does not contain an affine copy of $E$. Consequently, there do not exist any topologically universal Cantor sets on $\R^d$.
\end{theorem}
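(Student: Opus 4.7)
The plan is to exploit the flexibility of generalized Hausdorff measures: I will find a gauge function $h$ for which $E$ has positive $h$-Hausdorff measure, and then build a dense $G_{\delta}$ set $G$ of zero $h$-measure. Since affine maps are bi-Lipschitz, every affine copy of $E$ will inherit positive $h$-measure (for a doubling $h$), and so no such copy can lie inside $G$.

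The construction of $h$ is the main obstacle, since an arbitrary Cantor set may have Hausdorff dimension zero, so no power-law gauge suffices. Because $E$ is a Cantor set in $\R^d$, it is homeomorphic to $\{0,1\}^{\N}$, and hence can be realized as the intersection of a nested sequence of clopen partitions $E=\bigsqcup_{|s|=n}E_s$, indexed by binary strings $s$ of length $n$. By repeatedly splitting each clopen piece into two clopen sub-pieces (possible since every clopen subset of a Cantor set is itself a Cantor set) and using compactness, I can arrange $\mathrm{diam}(E_s)\to 0$ uniformly in $|s|=n$, and any two distinct pieces at level $n$ at distance at least some $\delta_n>0$, with $\delta_n\downarrow 0$. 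I define a continuous, strictly increasing gauge $h:[0,\infty)\to[0,\infty)$ with $h(0)=0$ and $h(\delta_n)\asymp 2^{-n}$, interpolated log-linearly so that $h$ is doubling. The uniform Cantor measure $\mu$ assigning mass $2^{-n}$ to each piece $E_s$ with $|s|=n$ satisfies $\mu(A)\lesssim h(\mathrm{diam}(A))$ for every Borel $A$ (since any $A$ with $\mathrm{diam}(A)<\delta_n$ meets at most one level-$n$ piece), and Frostman's mass distribution principle then yields $\mathcal{H}^h(E)>0$.

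The remaining steps are routine. Enumerate $\Q^d=\{q_k\}_{k\ge 1}$. For each $n$, choose radii $r_{n,k}>0$ with $\sum_k h(2r_{n,k})<2^{-n}$, which is possible since $h(0)=0$. Then $U_n=\bigcup_k B(q_k,r_{n,k})$ is open and dense in $\R^d$, so $G=\bigcap_n U_n$ is a dense $G_{\delta}$ set. The ball cover of $U_n$ gives $\mathcal{H}^h(G)\le\sum_k h(2r_{n,k})<2^{-n}$ for each $n$, hence $\mathcal{H}^h(G)=0$. On the other hand, any affine copy $t+T(E)$ is the image of $E$ under a bi-Lipschitz map with constants controlled by $\|T\|$ and $\|T^{-1}\|$; the doubling of $h$ then delivers $\mathcal{H}^h(t+T(E))\ge c_T\,\mathcal{H}^h(E)>0$. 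Hence $t+T(E)\not\subset G$, and $G$ contains no affine copy of $E$, so no Cantor set in $\R^d$ is topologically universal.
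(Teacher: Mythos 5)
Your proof is correct and reaches the conclusion by a route that differs from the paper's in one substantive way. The paper black-boxes the existence of a gauge $h$ with ${\mathcal H}^{h}(E)>0$ (citing Rogers), makes no regularity assumption on $h$, and therefore has to compensate for the lack of a dilation formula by introducing a countable family of rescaled gauges $h_c(x)=h(cx)$: for each $n$ it builds a dense $G_{\delta}$ set $G_n$ with ${\mathcal H}^{h_n}(G_n)=0$ and intersects them, so that an affine copy with $\|T^{-1}\|\le n$ is caught by $G_n$. You instead construct the gauge by hand via a mass-distribution argument on a nested clopen partition of $E$, and you additionally arrange $h$ to be doubling; doubling is exactly what lets a \emph{single} gauge absorb the factor $\|T^{-1}\|$ in the diameter estimate $|T^{-1}V_i|\le\|T^{-1}\|\,|V_i|$, so one dense $G_{\delta}$ set suffices and no intersection over $n$ is needed. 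The one step you should spell out is that the log-linear interpolation is doubling only if the ratios $\delta_n/\delta_{n+1}$ are bounded below by a constant greater than $1$; this can fail if the separations decrease too slowly. Since the $\delta_n$ only need to be lower bounds for the level-$n$ separations, you can simply replace $\delta_{n+1}$ by $\min(\delta_{n+1},\delta_n/2)$, after which the piecewise power-law interpolation has exponents in $(0,1]$, one checks $h(2x)\le 4h(x)$, and the Frostman bound $\mu(A)\le 2h(\mathrm{diam}(A))$ still holds. With that line added the argument is complete; it is more self-contained than the paper's (no appeal to Rogers, and a single avoiding set per Cantor set), at the cost of redoing the gauge construction explicitly.
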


We now turn to study  generic  universality of Cantor sets. Because $m(\R^d\setminus G) =0$ for any generic $G_{\delta}$ set, $m(G)$ must be infinity and thus those arbitrarily small dimensional dense $G_{\delta}$ sets no longer exist. Generic universality is also a closer analogue to the Erd\H{o}s similarity conjecture because we now require the sets of interest having positive Lebesgue measure. 

\medskip

We first focus on Cantor sets in ${\mathbb R}^1$.  In addition to Hausdorff dimension, Newhouse thickness of a Cantor set (see Section \ref{sec:Newhouse} for the precise definition) has been another useful quantity to describe the size of Cantor sets.  In particular, the gap lemma provides a natural sufficient condition for two thick Cantor sets to intersect. Our main theorem is the following:

\medskip
\begin{theorem}\label{theorem_positive_NW}
 There exists a dense $G_{\delta}$ set $G$ with $m(\R\setminus G) =0$ such that for all Cantor sets $J$ with positive Newhouse thickness, $G$ does not contain an affine copy of $J$. 

\medskip

As a consequence, Cantor sets on ${\mathbb R}^1$ with positive Newhouse thickness are not generically universal. 
\end{theorem}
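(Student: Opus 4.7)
The plan is to build the required generic $G_\delta$ set as the complement of an explicit $F_\sigma$ null set $F$ which is a countable union of Cantor sets whose Newhouse thicknesses tend to infinity, placed at every dyadic position and every dyadic scale. The Newhouse Gap Lemma will then force every affine copy of a positive-thickness Cantor set to meet $F$.

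Concretely, for each $n\in\N$ let $K_n$ be the middle-$\alpha_n$ Cantor set in $[0,1]$ with $\alpha_n=1/(2n+1)$. Then $K_n$ has Lebesgue measure zero, contains the endpoints $0$ and $1$, has longest gap $\alpha_n$, and has Newhouse thickness exactly $n$. For each $(n,k,\ell)\in\N\times\Z\times\Z$ I would set
\[
C_{n,k,\ell}=2^{-\ell}(K_n+k)\subset I_{k,\ell}:=[k2^{-\ell},(k+1)2^{-\ell}],
\]
which is a Cantor set with convex hull $I_{k,\ell}$, longest gap $\alpha_n 2^{-\ell}$, thickness $n$, and measure zero. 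Defining $F:=\bigcup_{n,k,\ell}C_{n,k,\ell}$, one obtains an $F_\sigma$ set with $m(F)=0$, so $G:=\R\setminus F$ is a dense $G_\delta$ with $m(\R\setminus G)=0$.

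To show that $G$ contains no affine copy of any positive-thickness Cantor set, fix such a set $J$ with $\tau(J)=\tau>0$ and an affine copy $J'=t+\lambda J$; then $\tau(J')=\tau$ and $L:=|\mathrm{conv}(J')|>0$. Choose $n\in\N$ with $n>1/\tau$ and $\ell\in\Z$ with $2^{-\ell}<L$, pick any $x_0\in J'$ and $k_0\in\Z$ with $x_0\in I_{k_0,\ell}$, and set $C:=C_{n,k_0,\ell}$. One then verifies the three linking hypotheses of the Newhouse Gap Lemma: (i) $\mathrm{conv}(C)\cap\mathrm{conv}(J')\ni x_0$; (ii) $C$ cannot lie in a gap $U$ of $J'$, for otherwise the endpoints of $I_{k_0,\ell}$ (which belong to $C$) would force $I_{k_0,\ell}\subset U$ and hence $x_0\in U$, contradicting $x_0\in J'$; (iii) $J'$ cannot lie in a gap of $C$, since every gap of $C$ has length $\alpha_n 2^{-\ell}<2^{-\ell}<L=|\mathrm{conv}(J')|$. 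Combined with $\tau(C)\tau(J')\geq n\tau>1$, the Gap Lemma yields $C\cap J'\neq\emptyset$, whence $J'\cap F\neq\emptyset$ and $J'\not\subset G$.

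The main obstacle is arranging the three linking conditions simultaneously, which is why the construction uses \emph{all} dyadic scales and positions: the scale $\ell$ is chosen fine enough that every gap of the chosen $C$ is shorter than $L$ (handling (iii)), while a point $x_0\in J'$ pins down the position $k_0$ so that $C$ cannot be swallowed by a gap of $J'$ (handling (i) and (ii)). The only other ingredient needed is the Newhouse Gap Lemma itself, which converts $\tau(C)\tau(J')>1$ plus the linking conditions into an actual intersection; the rest is bookkeeping on measures and Baire category.
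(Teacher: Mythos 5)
Your proposal is correct and follows essentially the same strategy as the paper's proof: take the complement of an explicit $F_\sigma$ null set built from translated and rescaled thick, measure-zero Cantor sets at all scales and positions, and use the Newhouse Gap Lemma (after checking that neither set sits in a gap of the other) to force every affine copy of a positive-thickness Cantor set to meet that $F_\sigma$ set. The differences are cosmetic --- the paper uses $N$-adic middle-digit Cantor sets on the lattice $N^n(K+\ell)$ and reads off the linking copy from the parameters $\lambda$ and $t$, while you use middle-$\alpha_n$ sets on the dyadic grid and pin down the linking copy via a point $x_0\in J'$ --- and your verification of the linking conditions is sound.
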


Our theorem also tells us something about the measure non-universality of Cantor sets. Although the Erd\H{o}s similarity conjecture can be resolved if we can show that all decreasing sequence are not universal, it is not even an easy question to show that a Cantor set is measure non-universal. Indeed,  Svetic \cite{Svetic}  proposed the following stronger question in this regard. {\it ``Is it true that for every uncountably infinite set, $E$, of real numbers, there exists $S \subset [0,1]$ of full measure that does not contain an affine copy of $E$?''} Notice that if a set is generically non-universal, then it must be measure non-universal. 

\medskip

Our Theorem \ref{theorem_positive_NW} now answers Svetic's question in a very strong way.  There exists a fixed set, namely $S = G \cap [0,1]$ where $G$ is defined in Theorem \ref{theorem_positive_NW}, of full Lebesgue measure in $[0,1]$ which doesn't contain  affine copies of any Cantor sets with positive thickness. 

\medskip

We now consider higher dimensions. First, one can show that a set containing a path-connected component cannot be generically universal (see Proposition \ref{prop_path-connected}). Therefore, our main interest will be focused on totally disconnected Cantor set. There has been recent work on generalizing the gap lemma into high dimension (see e.g \cite{FY2022}). However, their results do not seem to adjust into our situation. Instead, we  consider the projection of the Cantor set onto the one-dimensional coordinate-axis. Newhouse thickness for any compact sets can be defined easily (See Section \ref{sec:Newhouse}). We have the following definition. 

\begin{definition}
Let $E$ be a Cantor set on ${\mathbb R}^d$. We say that $E$ is {\bf Newhouse projectively thick} if for all invertible linear transformations $T$,  the orthogonal projection of  $T(E)$ onto the $x_1$-axis has positive Newhouse thickness.
\end{definition}

 We now have the following theorem. 

\begin{theorem}\label{theorem_high_question}
    Let $E$ be a Cantor set on ${\mathbb R}^d$ that is Newhouse projectively thick. Then $E$ is not generically universal. 
\end{theorem}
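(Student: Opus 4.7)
The plan is to push the $d$-dimensional statement down to the one-dimensional Theorem~\ref{theorem_positive_NW} by projecting onto the $x_1$-axis. Let $G_0 \subset \R$ be the dense $G_\delta$ set of full Lebesgue measure produced in Theorem~\ref{theorem_positive_NW}; by construction $G_0$ contains no affine copy of any Cantor set with positive Newhouse thickness. Define the candidate generic set in $\R^d$ by
$$
G := \pi_1^{-1}(G_0) = G_0 \times \R^{d-1},
$$
where $\pi_1 : \R^d \to \R$ is the orthogonal projection onto the first coordinate.

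The first step is to verify that $G$ is itself generic in $\R^d$. Writing $G_0 = \bigcap_n U_n$ with each $U_n$ open and dense in $\R$, each cylinder $U_n \times \R^{d-1}$ is open in $\R^d$, and any open box $A \times B$ of $\R^d$ meets it in $(A \cap U_n) \times B \neq \emptyset$, so each cylinder is dense; hence $G = \bigcap_n (U_n \times \R^{d-1})$ is dense $G_\delta$ in $\R^d$. For the measure condition, $\R^d \setminus G = (\R \setminus G_0) \times \R^{d-1}$ is a countable union of sets $(\R \setminus G_0) \times [-n,n]^{d-1}$, each of product measure zero since $m(\R \setminus G_0) = 0$, so $m(\R^d \setminus G) = 0$.

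For the main step, assume for contradiction that an affine copy $t + T(E) \subset G$ exists for some $t \in \R^d$ and invertible linear $T : \R^d \to \R^d$. Applying $\pi_1$ and using linearity gives
$$
\pi_1(t) + \pi_1\bigl(T(E)\bigr) \subset G_0.
$$
Set $E' := \pi_1(T(E))$. By the Newhouse projectively thick hypothesis applied to this particular $T$, $E'$ is a compact subset of $\R$ with positive Newhouse thickness. In particular $E'$ has no isolated points and empty interior, and since any compact nowhere dense subset of $\R$ is automatically totally disconnected, $E'$ is a Cantor set in $\R$ with positive thickness. Thus $\pi_1(t) + E'$ is an affine copy of $E'$ sitting inside $G_0$, contradicting Theorem~\ref{theorem_positive_NW}. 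Hence no such affine copy of $E$ lies in $G$, and $E$ fails to be generically universal.

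The delicate point is the final one: confirming that ``positive Newhouse thickness'' of the compact set $E' = \pi_1(T(E))$ automatically upgrades it to a Cantor set in the sense required by Theorem~\ref{theorem_positive_NW}. This should be immediate from the definitions reviewed in Section~\ref{sec:Newhouse} — a set with an interval component or an isolated point cannot carry positive thickness — but is the one place where the formal definitions of thickness (for general compact sets) and of Cantor set must be matched. Apart from that, the argument is a clean reduction to the one-dimensional result and no new machinery beyond Theorem~\ref{theorem_positive_NW} is needed.
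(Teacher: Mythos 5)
Your proposal follows the same route as the paper: project the hypothetical affine copy onto the $x_1$-axis and reduce to Theorem~\ref{theorem_positive_NW}. The only structural difference is that you use the cylinder $G_0\times\R^{d-1}$ where the paper uses the full product $G_0\times\dots\times G_0$; since both proofs only ever use the first-coordinate projection, your leaner choice works just as well (Lemma~\ref{lemma_product} applied with $G_2=\R^{d-1}$, or your direct verification, gives the genericity).

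The one point where your reasoning is wrong is exactly the ``delicate point'' you flag, and your proposed justification of it does not hold under the paper's definitions. Positive Newhouse thickness of the compact set $E'=\pi_1(T(E))$ does \emph{not} imply that $E'$ has no isolated points and empty interior: the definition of $\tau$ for general compact sets in Section~\ref{sec:Newhouse} first passes to the perfect part $P_{E'}$ (so isolated points of $E'$ are simply discarded and do not affect $\tau$), and it assigns $\tau(E')=\infty$ when $P_{E'}$ contains an interval. Thus $E'$ can have positive thickness while containing isolated points or even an interval, and your claim that $E'$ is itself a Cantor set is unjustified. The repair is short: if $P_{E'}$ contains an interval, then $\pi_1(t)+E'\subset G_0$ would force $G_0$ to contain an interval, which is impossible because the set $X$ in the proof of Theorem~\ref{theorem_positive_NW} is dense, so $G_0=X^c$ has empty interior; otherwise $P_{E'}$ is a compact, perfect, nowhere dense set, i.e.\ a Cantor set, with $\tau(P_{E'})=\tau(E')>0$, and $\pi_1(t)+P_{E'}\subset\pi_1(t)+E'\subset G_0$ contradicts Theorem~\ref{theorem_positive_NW}. (The paper's own proof applies Theorem~\ref{theorem_positive_NW} directly to $P[T(J)]$ and glosses over the same issue, so this is a gap worth closing explicitly rather than a defect unique to your argument.)
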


This theorem covers many examples of Cantor sets. We will show that every  self-similar set on ${\mathbb R}^d$, not lying on a hyperplane, whose linear parts are rotation-free will be Newhouse projectively thick. We note that there has been intensive research about the dimensional properties of projections of Cantor sets (for a survey, see e.g. \cite{MR3558147}), but the properties of  Newhouse thickness along orthogonal projections that we present here appears to be new. We conjecture that all self-similar or self-affine sets, not lying on a hyperplane, are Newhouse projectively thick. 

\medskip

    \subsection{Some discussion and open problems.} Let us summarize our results and other known results in the  following table.

\medskip

\begin{center}
\small{{\begin{tabular}{|c|c|c|c|}
  \hline
    &  Measure universal   &  Topologically universal  &  Generically universal\\
\hline
Finite sets & Yes  & Yes & Yes \\
\hline
Countably infinite sets& Unknown   & Yes & Yes\\
  \hline
Cantor sets on ${\mathbb R}^1$ & Unknown & No - Theorem \ref{theorem_invisible} &   No$^{*}$ - Theorem  \ref{theorem_positive_NW}\\
\hline
Cantor sets on ${\mathbb R}^d$, $d>1$ & Unknown & No - Theorem \ref{theorem_invisible} & No$^{*}$ - Theorem \ref{theorem_high_question} \\
\hline
Sets with interior& No & No & No\\
  \hline
  
\end{tabular}}}
\end{center}

\medskip

In the table, No$^{*}$ indicates a partial result established in this paper.  Theorem \ref{theorem_positive_NW} and Theorem \ref{theorem_high_question} refer to Cantor sets with positive Newhouse thickness on $\R^1$ and Newhouse projectively thick Cantor sets on $\R^d$ are not generically universal. It provides evidence that Cantor sets are unlikely to be generically universal. We believe that the following may be true, which draws an analogue of the Erd\H{o}s similarity conjecture  for generic sets.

\begin{conjecture}\label{conjecture_Cantor}
There are no generically universal Cantor sets on ${\mathbb R}^d$. 
\end{conjecture}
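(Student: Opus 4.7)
The plan is to extend the thickness-based constructions used for Theorems~\ref{theorem_positive_NW} and \ref{theorem_high_question}. In those results, the complement of the generic $G_\delta$ set $G$ is engineered so that the gap lemma, or its projective analogue, forces every affine copy of any sufficiently thick Cantor set to meet $G^c$. To prove Conjecture~\ref{conjecture_Cantor}, we must remove the thickness hypothesis entirely and handle Cantor sets of arbitrarily small thickness, possibly of Hausdorff dimension zero.

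A first attempt is to build a single generic $G_\delta$ set $G\subset\R^d$ whose complement is a countable union
\[
\R^d\setminus G \;=\; \bigcup_{n\in\N}\ \bigcup_{q,T}\ \bigl(q+T(C_n)\bigr),
\]
where each $C_n$ is a compact measure-zero Cantor set whose Newhouse thickness (or, in dimension $d>1$, projective Newhouse thickness) tends to $\infty$ with $n$, and where the parameters $q$ and $T$ range over suitably chosen countable dense sets of translations and invertible linear maps. Such a $G^c$ is an $F_\sigma$ null set, so $G$ is a dense $G_\delta$ of full Lebesgue measure. For any Cantor set $E$ with positive Newhouse thickness $\tau(E)>0$, pick $n$ large enough that $n\,\tau(E)>1$. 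Using density of the parameters to place a copy $q+T(C_n)$ inside the same interval as a prescribed affine copy $s+S(E)$, the gap lemma then forces their intersection, so $s+S(E)\not\subset G$. This unifies Theorems~\ref{theorem_positive_NW} and \ref{theorem_high_question} and leaves only the zero-thickness case.

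The chief obstacle is the case $\tau(E)=0$, in particular Cantor sets of Hausdorff dimension zero, where the gap lemma is simply unavailable. A natural way forward is to introduce a scale-dependent substitute for thickness: for an arbitrary Cantor set $E$, extract a decreasing sequence of scales $\{r_k\}$ along which the ratios of bridges to adjacent gaps stay bounded below, and design $C_n$ whose own scale sequence interlocks with $\{r_k\}$ so that every affine copy of $E$ must meet $C_n$ at some scale $r_k$. Alternatively, since Conjecture~\ref{conjecture_Cantor} only asks for the existence of a bad $G$ for each fixed $E$, one may allow $G$ to depend on $E$ and tailor $G^c$ directly to the gap structure of $E$. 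A third approach, in the spirit of Falconer's theorem on slowly decaying sequences and of Bourgain's sum-set argument, is a probabilistic or Baire-category construction of $G$ showing that the event ``some affine copy of $E$ lies in $G$'' is negligible. In each approach the hard part is keeping $G^c$ Lebesgue null while retaining enough structure to catch Cantor sets of arbitrarily small dimension; doing so in full generality appears to require genuinely new techniques, and is the reason this conjecture remains open.
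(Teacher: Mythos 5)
The statement you are asked to prove is labelled a \emph{conjecture} in the paper, and the paper offers no proof of it; the authors only establish the partial results of Theorem~\ref{theorem_positive_NW} (Cantor sets in $\R^1$ with positive Newhouse thickness are not generically universal) and Theorem~\ref{theorem_high_question} (Newhouse projectively thick Cantor sets in $\R^d$ are not generically universal). Your writeup correctly reproduces the spirit of those partial results -- a full-measure dense $G_\delta$ whose $F_\sigma$ complement is a union of affine copies of increasingly thick model Cantor sets, combined with the gap lemma -- and correctly identifies the obstruction to the full conjecture: when $\tau(E)=0$ the gap lemma gives nothing, and no substitute is known.

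To be clear about the status of what you wrote: it is not a proof, and you say so yourself ("appears to require genuinely new techniques, and is the reason this conjecture remains open"). The three escape routes you sketch in the last paragraph are all speculative. In particular, note that your second suggestion -- letting $G$ depend on $E$ -- is indeed all the conjecture requires (universality demands one affine copy inside \emph{every} generic set, so a single bad $G$ per $E$ suffices to refute it), but even with that freedom no construction is known for thickness-zero Cantor sets; and your first suggestion (interlocking scale sequences) has no supporting lemma in the literature cited by the paper that would force intersection without a thickness lower bound. So the correct assessment is that you have accurately summarized the state of the art and the precise location of the open difficulty, but you have not closed it, and neither does the paper.
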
  

\medskip

It is also reasonable that the following conjecture draws a parallel analogy of the Erd\H{o}s similarity conjecture in the purely topological non-measure-theoretic setting.

\begin{conjecture}\label{conjecture_uncountable}
There are no uncountable topologically universal sets on ${\mathbb R}^d$. 
\end{conjecture}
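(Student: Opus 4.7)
The plan is to reduce the conjecture to Theorem \ref{theorem_invisible} as much as possible via the following monotonicity observation: if $C\subset E$ and $C$ is not topologically universal, then $E$ is not topologically universal, since any dense $G_{\delta}$ set $G$ that avoids every affine copy of $C$ automatically avoids every affine copy of $E$ (otherwise $t+T(E)\subset G$ would force $t+T(C)\subset G$).

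First I would dispose of every uncountable $E\subset \R^d$ that contains a nonempty perfect subset $P$. If $P$ has nonempty interior then so does $E$, and the excerpt already notes that such $E$ is not topologically universal. Otherwise $P$ is perfect and nowhere dense; by restricting to a compact neighborhood of some point of $P$ and running a standard Cantor scheme inside, one extracts a Cantor set (in the paper's sense) $C\subset P\subset E$. Theorem \ref{theorem_invisible} then produces a dense $G_{\delta}$ set avoiding every affine copy of $C$, and by the monotonicity above it avoids every affine copy of $E$. This already settles the conjecture for every uncountable analytic (and hence every uncountable Borel) set in $\R^d$, since such sets enjoy the perfect set property.

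Second, I would confront the genuinely delicate case of uncountable $E$ failing the perfect set property (``totally imperfect'' sets, which exist in ZFC). For such $E$ the set $P$ of condensation points is a nonempty perfect subset of $\R^d$, $E\setminus P$ is countable, and $E\cap P$ is uncountable and dense in $P$, yet $E$ itself need not contain any Cantor set. My plan would be to try to strengthen Theorem \ref{theorem_invisible} to an approximation statement: for every Cantor set $C\subset \R^d$ there is a dense $G_{\delta}$ set $G$ such that whenever $D\subset G$ satisfies $t+T(C)\subset \overline{D}$ for some affine map $(t,T)$, the set $D$ is forced to be quantitatively sparse on that copy (for instance $\mathcal{H}^h$-null for the gauge $h$ used to build $G$). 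Applying this to a Cantor subset of $P$ and to $D=E\cap P$ should then yield the desired contradiction with the density of $E\cap P$ in $P$.

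The main obstacle lies entirely in this second step. The construction behind Theorem \ref{theorem_invisible} is presumably driven by positivity of some generalized Hausdorff measure on the given Cantor set, but totally imperfect uncountable sets support no such measure and are classically among the least tractable sets of real numbers. I therefore expect that a full proof of Conjecture \ref{conjecture_uncountable} will either require an essentially new technique that does not pass through Hausdorff-type gauge measures, or will need to invoke a set-theoretic regularity hypothesis (such as a form of projective determinacy) that rules out totally imperfect uncountable sets to begin with; this is presumably why the authors pose the statement as a conjecture rather than a theorem.
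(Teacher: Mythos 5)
The statement you are asked about is posed in the paper as a \emph{conjecture}, and the paper offers no proof of it; in fact the surrounding discussion explicitly concedes that Theorem \ref{theorem_invisible} does not settle it. Your analysis reproduces that discussion almost exactly: the monotonicity observation plus the perfect set property reduces the case of uncountable Borel (or analytic) sets to Theorem \ref{theorem_invisible} via a Cantor subset, and the remaining obstruction is precisely the one the authors name, namely totally imperfect uncountable sets such as Bernstein sets, whose existence in ZFC (via choice) blocks the extraction of any Cantor subset. The authors even remark that they do not know whether the conjecture is decidable in ZFC, which matches your closing assessment. So your first step is correct and is the paper's own partial result; your second step is, as you suspect, where no proof exists.

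One concrete warning about the strengthening you float in the second step: even if Theorem \ref{theorem_invisible} were upgraded to say that any $D\subset G$ whose closure contains an affine copy of $C$ must be $\mathcal{H}^h$-null, this would not yield a contradiction with the density of $E\cap P$ in $P$. Density in $P$ is perfectly compatible with being $\mathcal{H}^h$-null for every gauge $h$: a countable dense subset of $P$ is null for every gauge, and Proposition \ref{prop_r} forces positivity of some gauge measure only for \emph{compact} uncountable sets. Uncountable sets that are universally null (null for every gauge, indeed for every continuous Borel measure) exist under CH, and a totally imperfect set supports no compact uncountable subset to which Proposition \ref{prop_r} could be applied. So the contradiction you hope for in the totally imperfect case does not materialize, and the Hausdorff-gauge route genuinely cannot close the conjecture; this is exactly why the paper leaves it open.
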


Unfortunately, Theorem \ref{theorem_invisible} does not imply the validity of Conjecture \ref{conjecture_uncountable}. This is because in the realm of descriptive set theory, it is known that with the axiom of choice, one can construct a so-called {\it Bernstein set} \cite[p.48]{Kechris}, in which neither the set nor its complement  contain a perfect set. i.e. the set contains no perfect subset and is uncountable. This means that we cannot use Theorem \ref{theorem_invisible} to conclude Bernstein sets is topologically non-universal. It is unclear to us whether Conjecture \ref{conjecture_uncountable} is even decidable within the ZFC axioms of set theory.   Nonetheless, despite such a pathological example, every uncountable Borel set (or more generally analytic set) of ${\mathbb R}^d$ contains a perfect subset (see \cite[p.85, 88]{Kechris}) so they will not  be topologically universal.

\medskip

The paper is organized as follows. We prove Theorem \ref{theorem_invisible} in Section \ref{sec:topol}. We will define Newhouse thickness for compact sets of ${\mathbb R}^1$ in Section \ref{sec:Newhouse}. We will prove our theorems on ${\mathbb R}^1$ in Section \ref{sec:generic-1} and then theorems on ${\mathbb R}^d$ in Section \ref{sec:generic-d}.

\section{Topological non-universality of Cantor sets} \label{sec:topol}

A function $h$ is called a {\bf dimension function/ gauge function} if $h: [0,1]\to [0,\infty)$ is  non-decreasing, continuous  and $h(0) = 0$. The $h-$Hausdorff measure is the translation -invariant Borel measure such that 
$$
{\mathcal H}^{h}(E) = \lim_{\delta\to 0} \inf\left\{\sum_{i=1}^{\infty} h (|U_i|): E\subset \bigcup_{i=1}^{\infty} U_i, \ |U_i|\le \delta\right\}
$$
where $|U|$ denotes the diameter of $U$. If $h(x) = x^s$, then ${\mathcal H}^h$ is the standard $s$-dimensional Hausdorff measure. 
\medskip

\begin{proposition}\label{prop_h}
For any dimension function $h$, there always exist  a dense $G_{\delta}$ set  $G$ on ${\mathbb R}^d$ such that ${\mathcal H}^{h}(G) = 0.$
\end{proposition}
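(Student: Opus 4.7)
The plan is to construct $G$ by the standard Baire-category recipe: enumerate a countable dense set in ${\mathbb R}^d$, cover each point by a tiny open ball, take the union (which is automatically a dense open set), and then intersect countably many such covers whose total $h$-size shrinks to zero.

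First I would enumerate $\mathbb{Q}^d = \{q_n : n \in {\mathbb N}\}$. For each $k \in {\mathbb N}$ and each $n \in {\mathbb N}$, I would use the hypotheses on $h$ (continuity at $0$ with $h(0)=0$) to select a radius $r_{k,n} > 0$ satisfying the two constraints
\[
r_{k,n} \le \frac{1}{k} \qquad \text{and} \qquad h(2 r_{k,n}) \le \frac{1}{k \cdot 2^n}.
\]
Both are simultaneously achievable: since $h$ is continuous and $h(0)=0$, for any target $\varepsilon > 0$ there is $\rho > 0$ with $h(\rho) < \varepsilon$; taking $\rho$ small enough and then possibly shrinking it below $1/k$ yields $r_{k,n}$. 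Let $B_{k,n}$ be the open Euclidean ball of radius $r_{k,n}$ about $q_n$, let $U_k = \bigcup_{n} B_{k,n}$, and set $G = \bigcap_{k} U_k$.

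Next I would verify the two required properties. Each $U_k$ is open and contains every rational point, hence is dense in ${\mathbb R}^d$; by the Baire Category theorem $G$ is a dense $G_\delta$. For the $h$-Hausdorff measure, fix $\delta > 0$ and choose $k$ with $1/k < \delta$. The collection $\{B_{k,n}\}_{n \in {\mathbb N}}$ covers $G \subset U_k$ by sets of diameter $2 r_{k,n} \le 2/k < 2\delta$, so it qualifies as a cover admissible in the definition of ${\mathcal H}^{h}_{2\delta}(G)$, giving
\[
{\mathcal H}^{h}_{2\delta}(G) \;\le\; \sum_{n=1}^{\infty} h(2 r_{k,n}) \;\le\; \sum_{n=1}^{\infty} \frac{1}{k \cdot 2^n} \;=\; \frac{1}{k}.
\]
Letting $k \to \infty$ (and hence $\delta \to 0$) forces ${\mathcal H}^{h}(G) = 0$.

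There is no real obstacle here; the only point that requires care is the simultaneous enforcement of $r_{k,n} \to 0$ uniformly in $n$ (so that the covers are genuine $\delta$-covers for arbitrarily small $\delta$) together with summability of $h(2r_{k,n})$ in $n$. Both hold automatically from the $1/(k \cdot 2^n)$ budget combined with the continuity of $h$ at the origin, which is the only quantitative hypothesis on the gauge function that we actually use.
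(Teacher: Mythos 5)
Your proof is correct and follows essentially the same approach as the paper: cover the rationals by neighborhoods whose $h$-sizes sum to an arbitrarily small budget, take the union to get a dense open set, and intersect over the budget parameter. The only (cosmetic) difference is that you choose the radii directly from the continuity of $h$ at $0$, whereas the paper constructs a generalized inverse $W$ of $h$ to prescribe the diameters exactly; your route is slightly cleaner since it sidesteps the domain issues of that inverse.
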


\begin{proof}
For  any dimension function $h$, $h^{-1}$ may not exist since $h$ may not be strictly increasing. However, we define
$$
W(s) = \inf\{t>0: h(t)>s\} = \sup\{t>0: h(t)\le s\}. 
$$
Then we have $h(W(s))  = s$. Moreover, $W$ is strictly increasing whence $W(s)>0$ for all $s>0$. Let us now enumerate the rationals  ${\mathbb Q}^d =  \{r_1,r_2,...\}. $ Consider the following dense $G_{\delta}$ set
$$
G = \bigcap_{k=1}^{\infty} \left(\bigcup_{i=1}^{\infty} \left(r_i-\frac{W(2^{-(i+k)})}{2\sqrt{d}}, r_i+\frac{W(2^{-(i+k)})}{2\sqrt{d}}\right)^d\right).
$$
Then the diameter of the open squares inside the union is $W(2^{-(i+k)})$, so
$$
{\mathcal H}^h(G)\le \sum_{i=1}^{\infty} h (W(2^{-(i+k)})) = \sum_{i=1}^{\infty} 2^{-(i+k)} = 2^{-k}.
$$
As  $k$ is arbitrary, ${\mathcal H}^h(G) = 0$. The proof is complete. 
\end{proof}

\medskip



We need a result from Rogers \cite[p.67]{Rogers}.

\begin{proposition}\label{prop_r}
Let $\Omega$ be an uncountable complete separable metric space. Then there exists a compact perfect set $C$ and a dimension function $h$ such that 
$$
0<{\mathcal H}^h(C)<\infty
$$
Consequently, suppose that a compact set $K$ in a metric space satisfies ${\mathcal H}^h (K) = 0$ for all dimension functions $h$. Then $K$ is a countable set. 
\end{proposition}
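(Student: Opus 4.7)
The plan is to treat the existence statement in the first sentence as a black-box citation from Rogers. For context, the usual proof uses the perfect set theorem from descriptive set theory to extract a compact perfect set $C \subset \Omega$ homeomorphic to $\{0,1\}^{\N}$, then transports a Bernoulli probability measure onto $C$ to obtain a Borel probability measure $\mu$ supported on $C$, and finally builds a gauge function $h$ tailored to the modulus of continuity of $\mu$ so that $\mathcal{H}^h(C)$ is comparable to $\mu(C)$ by a Frostman-type mass-distribution argument, with the upper bound coming from covering $C$ by balls whose $h$-mass is controlled.

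The substantive work for this paper is the ``consequently'' clause, which I would prove by contrapositive. Suppose $K$ is a compact uncountable set in a metric space. Being compact, $K$ is itself a complete separable metric space (compactness gives total boundedness, hence separability, and completeness is automatic), so the hypotheses of the first part apply with $\Omega = K$. This produces a compact perfect set $C \subseteq K$ and a dimension function $h_0$ with $0 < \mathcal{H}^{h_0}(C) < \infty$. Monotonicity of the Hausdorff measure then gives $\mathcal{H}^{h_0}(K) \geq \mathcal{H}^{h_0}(C) > 0$, contradicting the assumption that $\mathcal{H}^h(K) = 0$ for every dimension function $h$. Hence $K$ must be countable.

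The only real obstacle lies in Rogers' part: producing a single gauge $h$ with $0 < \mathcal{H}^h(C) < \infty$ requires carefully tailoring $h$ to the metric geometry of $C$, since the usual power gauges $t^s$ will typically fail for perfect sets of wildly varying local structure (e.g.\ Cantor sets of Hausdorff dimension zero, such as those constructed by sufficiently fast-decreasing contraction ratios). Once that theorem is accepted, the ``consequently'' clause reduces to the short monotonicity argument above, so in the context of this paper the bulk of the proof is really the citation and the derivation of the corollary is essentially immediate.
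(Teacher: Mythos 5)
Your proof is correct and matches the paper's: the first statement is cited from Rogers, and the ``consequently'' clause follows by monotonicity of $\mathcal{H}^{h_0}$ after producing a perfect set $C\subset K$ with positive measure. The only cosmetic difference is that the paper first extracts a perfect subset $\Omega\subset K$ before invoking the first statement, whereas you apply it directly with $\Omega=K$ (which is fine, since a compact metric space is complete and separable); the paper does additionally note that Rogers' gauge functions are only right-continuous and that his construction can be adjusted to give a continuous $h$, a detail worth keeping in mind.
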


We remark that in \cite{Rogers}, dimension functions were defined to be right-continuous, but if we inspect the proof on page 65 in the book carefully, it is clear that we can construct $h$ to be continuous for the first statement.  For the second statement,  we note  that if $K$ is  uncountable and compact, then $K$ contains a perfect subset  $\Omega$. Applying the first statement, we have a perfect set $C\subset \Omega\subset K$ with ${\mathcal H}^h(C)>0$ for some dimension function $h$ which leads to a contradiction. 

\medskip

We should also remark on the other hand that there exists a Cantor set $K$ on ${\mathbb R}^1$ such that for all dimension functions $h$, either ${\mathcal H}^h(K) = 0$ or $\infty$ \cite{D71}. See also \cite{CDM13} for a recent survey. Nonetheless, in this case, we can still extract a sub-Cantor set of $K$ with finite positive Hausdorff measure for some gauge function $h$ by Proposition \ref{prop_r}. It means that ${\mathcal H}^h(K) =\infty$.

\medskip

Heuristically, to prove Theorem \ref{theorem_invisible}, we just take a suitable gauge function and a dense $G_{\delta}$ set according to Proposition \ref{prop_h}. Then the monotonicity of measure immediately leads to a contradiction. However, for general dimension functions,  we do not have a dilation formula for all invertible linear transformations.  Therefore,  we need the following lemma. 

\medskip

Recall that for any invertible linear transformation $T$, $\|T\|$ denotes the operator norm of $T:{\mathbb R}^d\to {\mathbb R}^d$ with $\R^d$ endowed with the Euclidean norm. i.e. 
$$
\|Tx\|\le \|T\|\|x\|
$$
holds for all $x\in\R^d$.

\begin{lemma}\label{lemma_Cantor_gauge}

Let $E \subset \mathbb{R}^{d}$ be a Borel set, $h$ a dimension function, and let $c>0$. Then the dimension function $h_c = h(cx)$ satisfies
$$
{\mathcal H}^{h_c}(T(E))\ge  {\mathcal H}^{h}(E) 
$$ 
for all $T$ such that $\|T^{-1}\| \le  c$.  

\end{lemma}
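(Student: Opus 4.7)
The plan is to use the definition of the $h$-Hausdorff measure directly and transport covers of $T(E)$ back to covers of $E$ through $T^{-1}$, relying only on the operator-norm bound $\|T^{-1}\| \le c$ and the monotonicity of $h$.

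First I would fix $\delta > 0$ and let $\{U_i\}_{i=1}^{\infty}$ be any cover of $T(E)$ with $|U_i| \le \delta$. Setting $V_i := T^{-1}(U_i)$, the collection $\{V_i\}$ covers $E$ because $T$ is a bijection of $\mathbb{R}^d$. The key estimate is the elementary inequality
$$
|V_i| \;=\; \sup_{x,y \in U_i}\|T^{-1}x - T^{-1}y\| \;\le\; \|T^{-1}\| \cdot |U_i| \;\le\; c\,|U_i|.
$$
Since $h$ is non-decreasing this gives $h(|V_i|) \le h(c|U_i|) = h_c(|U_i|)$ term by term.

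Next I would sum over $i$ and take the infimum over covers. Because $|V_i| \le c\delta$, the family $\{V_i\}$ is admissible in the definition of the pre-measure $\mathcal{H}^{h}_{c\delta}(E)$, so
$$
\mathcal{H}^{h}_{c\delta}(E) \;\le\; \sum_{i=1}^{\infty} h(|V_i|) \;\le\; \sum_{i=1}^{\infty} h_c(|U_i|).
$$
Taking the infimum over all $\delta$-covers of $T(E)$ yields $\mathcal{H}^{h}_{c\delta}(E) \le \mathcal{H}^{h_c}_{\delta}(T(E))$. Letting $\delta \to 0^+$ (so that $c\delta \to 0^+$ as well, since $c$ is fixed) gives the desired inequality $\mathcal{H}^{h}(E) \le \mathcal{H}^{h_c}(T(E))$.

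The only subtleties worth noting — none of which pose real obstacles — are that $h_c(x) = h(cx)$ is itself a valid dimension function (non-decreasing, continuous, and vanishing at $0$), at least on a neighborhood of $0$, which is all that matters as $\delta \to 0$; that $T(E)$ is again Borel because invertible linear maps are homeomorphisms; and that the covering sets $V_i$ need not be Borel, but the definition of $\mathcal{H}^h$ allows arbitrary covers. The main conceptual content is simply the operator-norm bound together with monotonicity of $h$, so no deeper machinery should be required.
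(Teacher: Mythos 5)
Your proof is correct and follows essentially the same route as the paper's: pull back an arbitrary cover of $T(E)$ through $T^{-1}$, bound the diameters via $\|T^{-1}\|\le c$, apply monotonicity of $h$, and take infima. Your explicit tracking of the $\delta\to 0$ limit is slightly more careful than the paper's write-up but adds nothing substantively different.
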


\begin{proof} First, from a direct observation we see that $h_c(x) = h(cx)$ is a dimension function. Let  $T$ such that $\|T^{-1}\| \le c$.  We note that  any covering $\bigcup_{i=1}^{\infty}V_i$ of $T(E)$ implies  that $\bigcup_{i=1}^{\infty}T^{-1}(V_i)$ is  a covering of $E$, so
$$
{\mathcal H}^h(E) \le \sum_{i=1}^{\infty} h (|T^{-1}V_i|).
$$
But from the definition of $\|T^{-1}\|$, the diameters satsify
$$
|T^{-1}V_i|\le \|T^{-1}\| |V_i|\le c |V_i|.
$$
Hence, 
$$
{\mathcal H}^{h}(E)\le  \sum_{i=1}^{\infty} h(|T^{-1}V_i|) \le  \sum_{i=1}^{\infty} h(c|V_i|) = \sum_{i=1}^{\infty} h_c(|V_i|).
$$
We now take infimum among all covers and obtain our desired conclusion.
\end{proof}

\medskip

\noindent{\it Proof of Theorem \ref{theorem_invisible}.} Let $E$ be a Cantor set on $\R^d$. By Proposition \ref{prop_r},  we can find a dimension function  $h$  such that ${\mathcal H}^h (E)>0$.  For each $n\in{\mathbb N}$, let us take the dimension function $h_n$ in Lemma \ref{lemma_Cantor_gauge} such that ${\mathcal H}^{h_n}(T(E))\ge {\mathcal H}^{h}(E)>0$ whenever $\|T^{-1}\|\le n$. 

\medskip

Now using Proposition \ref{prop_h}, we can find a dense $G_{\delta}$ set $G_n$ such that ${\mathcal H}^{h_n}(G_n) = 0$. By the Baire Category theorem, $G = \bigcap_{n=1}^{\infty}G_n$ is a dense $G_{\delta}$ set. We now claim that this $G$ cannot contain any affine copy of the Cantor set $E$. Indeed, suppose $t+T(E)$ is contained in $G$. Let $n\in{\mathbb N}$ be such that $\|T^{-1}\|\le n$, then $t+T(E)\subset G_n$. By taking ${\mathcal H}^{h_n}$ Hausdorff measure,  we find a contradiction  since ${\mathcal H}^{h_n}(G_n) = 0$, but $${\mathcal H}^{h_n}(t+T(E)) = {\mathcal H}^{h_n}(T(E))>0$$ by Lemma \ref{lemma_Cantor_gauge}.
\qquad$\Box$

\medskip

\section{Preliminaries on Newhouse thickness} \label{sec:Newhouse}
 The proof of our theorems on generic universality relies on the Newhouse gap lemma. The purpose of this section is to define the thickness and state the gap lemma that are necessary for our proof.  The definition of thickness and the gap lemma we use were first introduced by Newhouse \cite{Newhouse}. Our definition below is taken from the book of Palis and Takens  \cite{palis&takens}. We first need to define the gaps and bridges of Cantor sets in order to define Newhouse thickness.

\begin{definition}[Gap]
    Let $K$ be a Cantor set on ${\mathbb R}^1$.  A {\bf gap} of $K$ is a connected component of $\R \setminus K$.     A {\bf bounded gap} is a bounded connected component of $\R \setminus K$.      
\end{definition}
We now define the bridge of $C$ of Cantor set $K$. $|I|$ denotes the length of the interval $I$.  
\begin{definition}[Bridge, c.f. \cite{palis&takens}]
    Let $K$ be a Cantor set on ${\mathbb R}^1$ and $U= (u',u)$  be a bounded gap of $K$ with boundary point $u$.  The {\bf bridge} $C$ of $K$ at $u$ is the maximal interval on the right hand side of $u$  such that:
    \begin{itemize}
        \item $u$ is a boundary point of $C$
        \item $C$ contains no point of a gap $U'$ whose length $|U'| \geq |U|$.
    \end{itemize}
    We can define analogously the bridge for $u'$ by considering the maximal interval on the left hand side of $u'$ with the same property.  
\end{definition}

For clarity, Figure \ref{fig:thickness} shows that there may be smaller bounded gaps contained in $C$.  

\vspace*{0.25cm}
   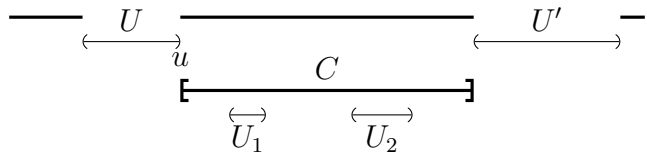
\begin{figure}[b]
        \centering
        \begin{tikzpicture}[scale=.65]
        \draw[very thick] (-6.5,0) -- (-5,0);
        \draw[very thick] (-3,0) -- (3,0); 
        \draw[very thick] (6,0) -- (6.5,0) ;
        \draw[{(-)}] (-5,-0.5) --  (-3,-0.5) node[midway, above]{$U$} node[below]{$u$};
        \draw[{(-)}] (3,-0.5) -- (6,-0.5) node[midway, above]{$U'$};
        \draw[(-)] (-2,-2)--(-1.25,-2)node[midway, below]{$U_1$};
        \draw[(-)] (0.5,-2) -- (1.75,-2)node[midway, below]{$U_2$};
        
        \draw[very thick,below,{[-]}]  (-3,-1.5) -- (3,-1.5) node[midway, above]{$C$};
        \end{tikzpicture}
        \caption{At point $u$, we move to the right until we hit another gap of longer length. The interval travelled is the bridge $C$. Note that the Bridge contains gaps of smaller length than $U$ such as $U_1$ and $U_2.$ in the figure.}
        \label{fig:thickness}
        \end{figure}

We use this notion to define the Newhouse Thickness.  Intuitively the thickness of a Cantor set can be thought of as the infimum of ratios between the bounded gaps and the bridges.  
\begin{definition}[Newhouse Thickness for Cantor sets \cite{palis&takens}]\label{NWthick}
    The {\bf Newhouse Thickness} of $K$ at $u$ is defined as
    $$\tau(K, u) = \frac{|C|}{|U|}.$$
    Moreover, let  $\mathcal{U}$ be the set of all boundary points of bounded gaps in the Cantor set, the thickness of the entire Cantor set is 
    $$\tau(K) = \inf_{u\in \mathcal{U}} \tau(K, u) $$
\end{definition}

We will consider projections of Cantor sets in higher dimension onto the $x_1$-axis.  Such projections may not be perfect or may contain intervals, so we need to define the Newhouse thickness for general compact sets of ${\mathbb R}^1$. 

\medskip

We first recall some terminologies in point set topology \cite{BabyRudin}. Let $K\subset {\mathbb R}^1$ be a compact set; $x\in K$ is called a {\bf condensation point} of $K$ if every open neighborhood of $x$ contains uncountably many points of $K$. It is known that the set of all condensation points of $K$ is a perfect set inside $K$.  We call the set of all condensation points of $K$ the {\bf perfect part} of $K$.

\begin{definition}[Newhouse Thickness for general compact sets] Let $K$ be a compact set on ${\mathbb R}^1$ and let $P_K$ be the perfect part of $K$.   We now define
$$
\tau (K) = \left\{\begin{array}{ll} 0 &  \mbox{if $P_K = \emptyset$}\\\infty & \mbox{if $P_K$ contains an interval} \\  \tau(P_K) & \mbox{otherwise} \\\end{array}\right.
$$
\end{definition} 

\medskip

\begin{example}\label{example_digit}[Newhouse Thickness of the $N$-digit expansion Cantor Set]
    {\rm Let $N\ge 2$ and let $j\in\{1,...,N-2\}$. Define  $K$ to be the self-similar Cantor set by dividing $[0,1]$ into $N$ intervals of equal length, deleting the interval $[\frac{j}{N}, \frac{j+1}{N}]$ and repeating the process.   Then it is well-known that $K$ consists of all real numbers whose $N$-adic expansion omit the digit $j$:
    $$
    K = \left\{\sum_{k=1}^{\infty}\frac{d_k}{N^k}: d_k\in \{0,1,...,N-1\}\setminus\{j\}\right\}.
    $$
    Now, each gap at the $n$-th iteration is of length $N^{-n}$.  The Newhouse thickness is equal to $\min \{j, N-j-1\} $}.
    \end{example}

\medskip

We notice an important fact that Newhouse thickness is invariant under any invertible affine transformation, $x\mapsto t+\lambda x$ where $\lambda\ne 0$,  on ${\mathbb R}^1$.  The following lemma is now commonly referred to as the Newhouse Gap Lemma. 

\begin{lemma}  \label{gaplemma} {\bf (Newhouse Gap Lemma)}
    Let $K_1, K_2, \subset \R$ be Cantor sets with Newhouse thickness $\tau_1$ and  $\tau_2$ respectively and  $\tau_1 \cdot \tau_2 \ge 1$. Suppose that   $K_1$ is not contained in one of the gaps of $K_2$ and $K_2$ is not contained in one of  the gaps of $K_1$. Then $K_1 \cap K_2 \neq \emptyset.$
\end{lemma}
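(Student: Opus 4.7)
The plan is an argument by contradiction via a descent on ``linked'' gaps. Call a bounded gap $U$ of $K_1$ and a bounded gap $V$ of $K_2$ \emph{linked} if $U\cap V\ne\emptyset$ and neither contains the other, equivalently if each contains exactly one endpoint of the other. I would assume $K_1\cap K_2=\emptyset$ and aim for a contradiction.

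First I would establish the existence of a linked pair. Since $K_1\cap K_2=\emptyset$, every point of $K_1$ lies in some gap of $K_2$. Using both hypotheses that neither Cantor set is contained in a single gap of the other, a short case analysis produces a point of $K_1$ sitting inside a bounded gap $V=(v_1,v_2)$ of $K_2$, whose endpoint then lies in some bounded gap $U=(u_1,u_2)$ of $K_1$. The facts that $U$ and $V$ cannot share an endpoint (such a point would lie in $K_1\cap K_2$) and that neither can contain the other (the containing side would swallow a point of the opposite Cantor set into one of its own gaps) force $(U,V)$ to be linked.

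The heart of the argument is the descent step: from a linked pair $(U,V)$ in the configuration $u_1<v_1<u_2<v_2$, I would produce a linked pair with strictly smaller gap-size product. Consider the bridge $C_1$ of $K_1$ at $u_2$ extending rightward into $V$ with $|C_1|\ge\tau_1|U|$, and the bridge $C_2$ of $K_2$ at $v_1$ extending leftward into $U$ with $|C_2|\ge\tau_2|V|$. If neither bridge crosses the opposite endpoint, then $|C_1|<v_2-u_2<|V|$ and $|C_2|<v_1-u_1<|U|$; multiplying gives $\tau_1\tau_2<1$, contradicting the hypothesis. So one bridge crosses: say $v_2\in C_1$. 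Since $v_2\in K_2\setminus K_1$, it lies in some bounded gap $U'=(u_1',u_2')$ of $K_1$ sitting inside $C_1$ (its endpoints are in $K_1$ while the right end of $C_1$ borders a gap of length $\ge|U|$ that $U'$ cannot cross), so by the bridge property $|U'|<|U|$. Checking endpoints, $u_1'>u_2>v_1$ together with $u_1'<v_2$ puts $u_1'\in V$, while $v_2\in U'$; thus $(U',V)$ is linked with $|U'|\cdot|V|<|U|\cdot|V|$. The case where $C_2$ crosses is symmetric.

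Iterating produces linked pairs $(U_n,V_n)$ whose product $|U_n|\cdot|V_n|$ strictly decreases; since the sequence of gap sizes of a Cantor set has $0$ as its only accumulation point, along a subsequence one of $|U_n|,|V_n|$ tends to zero, and without loss of generality $|U_n|\to 0$. The endpoints of $U_n$ lie in $K_1$ and remain inside a bounded region, so by extracting a further subsequence they converge to a common limit $p\in K_1$. The linkage then forces some endpoint of $V_n$ to lie inside $U_n$, hence within $|U_n|\to 0$ of $p$; closedness of $K_2$ yields $p\in K_2$, giving $p\in K_1\cap K_2$, the desired contradiction. The main obstacle I expect is the descent step: verifying cleanly that $U'$ sits inside $C_1$ and that $(U',V)$ is genuinely linked rather than nested requires repeated use of $K_1\cap K_2=\emptyset$ and the perfectness of $K_1$ to rule out boundary coincidences, and the algebraic form $\tau_1\tau_2\ge 1$ is exactly what ensures at least one of the two bridges must cross.
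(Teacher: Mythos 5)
The paper does not prove this lemma at all: it is quoted as a known result, with the statement taken from Palis--Takens and attributed to Newhouse, and used as a black box. Your argument is precisely the standard proof from those references (find a pair of linked bounded gaps, then run the bridge/descent step), and it is correct: the two delicate points you flag do check out --- the new gap $U'\ni v_2$ has left endpoint $u_1'\ge u_2>v_1$ and right endpoint $u_2'>v_2$, so $(U',V)$ is genuinely linked and not nested, and $|U'|<|U|$ follows because $U'$ meets the bridge $C_1$, which by definition meets no gap of length $\ge|U|$. The only cosmetic imprecisions are that $U'$ need only intersect $C_1$ rather than sit inside it, and that the cleanest way to finish is to note that one of the two gap sequences is updated infinitely often and hence (being a strictly decreasing sequence of bounded-gap lengths of a fixed compact set) tends to zero; both are exactly what your sketch intends.
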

For additional information about the intersection in the above gap lemma, one can refer to  \cite{Astels_transaction}. We are now ready to prove our main results.

\medskip

\section{Generic non-universality of Cantor sets on ${\mathbb R}^1$.} \label{sec:generic-1}

We first prove our main theorems on ${\mathbb R}^1$. The construction of the $F_{\sigma}$ set in Equation (\ref{eqX}) in the proof below was motivated from \cite{MR2268116}, in which the authors constructed wavelets on a real line analogue of Cantor sets. The set in Equation (\ref{eqX}) is exactly the set they used. 

\medskip

\noindent{\bf Proof of Theorem \ref{theorem_positive_NW}.} We will establish the following claim:

\medskip
{\bf Claim: } Given an $\epsilon_{0} > 0$, there exists a dense $G_{\delta}$ set $G$ with $m(\R\setminus G) =0$ such that for any Cantor set $J$ with Newhouse thickness $\tau(J) \geq \epsilon_{0}$, $G$ contains no affine copy of $J$.


\medskip

Assuming the claim, we construct a dense $G_{\delta}$ set $G_n$ of $m(\R\setminus G_n) =0$ with the property that it does not contain affine copies of Cantor sets with Newhouse thickness at least $1/n$. Then we consider 
$$
G = \bigcap_{n=1}^{\infty}G_n.
$$
Baire Category theorem ensures $G$ is a dense $G_{\delta}$ set. This $G$ will not contain any affine copy of any Cantor sets with positive Newhouse thickness. Moreover, by the subadditivity of measure, it is easy to see that $m(\R\setminus G) = 0.$ This will complete the proof.

\medskip

We now justify the claim. Let $\epsilon_0>0$ be given.  Consider the Cantor sets $K$ defined by contraction ratio $1/N$ and digits $\{0,1,...,N-1\}\setminus\{(N-1)/2\}$ and $N$ is odd as in Example \ref{example_digit}, we know that $\tau (K) = \frac{N-1}{2}$. Therefore,  we can find a  sufficiently large $N$ so that $\tau(K)>\epsilon_0^{-1}$. 

\medskip

Using the Cantor set $K$, we now define $X$ such that 
\begin{equation}\label{eqX}
X = \bigcup_{n \in \Z} \bigcup_{\ell \in \Z} N^n(K+\ell),    
\end{equation}
creating an $F_\sigma$ set. Now consider $X^c$.  Because $K^c$ is open and dense and so is its translated and dilated copies, $G = X^c$ is a dense $G_{\delta}$ and $m (\R\setminus G) = m(X) = 0$ as the Cantor set $K$ we constructed is of Lebesgue measure zero.  We now show that for any Cantor set $J$ with $\tau(J)\ge \epsilon_0$, $G = X^c$ contains no affine copy of $J$.

\medskip

 Suppose that we have some Cantor set $J$ with Newhouse thickness $\tau(J) \ge \epsilon_0$. Without loss of generality, by rescaling and translation,  we  can assume that the convex hull of $J$ is equal to $[0,1]$. We now fix any affine copy  $t+ \lambda J$ where $t\in\R$ and $\lambda\ne 0$. There exists a unique $n$ such that 
\begin{equation}\label{eq_lambda_bound}
    |\lambda| \in (N^{n-1}, N^n].
\end{equation}
Similarly there exists a unique $\ell$ such that 
\begin{equation}
t \in (\ell  N^n, (\ell+1)N^n].    
\end{equation}
Let 
$$K_1 = N^n(K+\ell) \text{ and } K_2 = t+ \lambda J.$$
The convex hull of $K_1,$ is  $[\ell  N^n, (\ell+1)N^n]$.  So, by our choice of $t$, we know that $K_2$ is not in the unbounded gap of $K_1$ and vice versa.

\medskip
Now we will check the construction of our Cantor sets such that each is not contained in the bounded gaps of the other. For $i=1,2$, we define $O_i$ to be the largest open bounded gap in $K_i$ and $I_i$ be the convex hull of $K_i$.  For $K_1$, we have  $|O_1| = N^{n-1}$ and  $ |I_1|=N^n$. For $K_2$, we recall that the convex hull of $J$ is $[0,1]$. Therefore, we have 
$$|O_2| =|\lambda|\cdot |O_J| \le |\lambda| \mbox{ and } \  |I_2| = |\lambda|$$ where $O_J$ is the largest  open bounded gap interval in $J$.  Therefore by our construction in (\ref{eq_lambda_bound}), the following two inequalities hold: $$|O_1|\leq |I_2| \text { and } |O_2| \leq |I_1|.$$ The inequalities imply that  $K_1$ is not fully contained in the bounded gaps of $K_2$ and $K_2$ is not fully contained in the bounded gaps of $K_1$.

\medskip

Since Newhouse thickness is invariant under affine transformation on ${\mathbb R}^1$, by our choice of $K$ we have that 
$$\tau(K_1)\tau(K_2) = \tau (K)\tau(J) \ge \epsilon_0^{-1}\cdot \epsilon_0 = 1.$$
 Therefore, the Gap Lemma in Lemma \ref{gaplemma} implies that $K_1 \cap K_2$ is non-empty and hence $K_2 = t+\lambda J$ intersects with one of the unions in $X$ in (\ref{eqX}). It implies that $t+\lambda J$ cannot be fully contained in the $G_{\delta}$ set $G = X^{c}$.  This establishes the claim, and therefore we conclude that $J$ is not topologically universal.
\qquad$\Box$

\medskip


\medskip

\begin{remark}
{\rm We would like to remark that Bourgain proved that a Minkowski sum of three infinite sets cannot be measure universal. We can use this result to deduce that some Cantor sets of zero Newhouse thickness cannot be measure universal. Let $N_j\ge 2$ be integers and ${\mathcal D}_j\subset \{0,1,...,N_j-1\}$ be subsets of cardinality at least 2.  Define}
\begin{equation}\label{eqC}
C = \left\{\sum_{j=1}^{\infty} \frac{d_j}{N_1...N_j}: d_j\in{\mathcal D}_j\right\}.
\end{equation}
{\rm Then $C$ is not measure universal. Indeed, for $k= 0,1,2$, let }
$$
S_k = \left\{ \sum_{j\equiv k (\mbox{\rm mod} \ 3)} \frac{d_j}{N_1...N_j}: d_j\in{\mathcal D}_j \right\}.
$$
{\rm By the result of Bourgain, $C = S_0+S_1+S_2$ is a sum of three infinite sets and hence is not measure universal. Moreover, if $N_j\to\infty$, then the Cantor set $C$ above has zero Newhouse thickness. }

\medskip

{\rm On the other hand, our Theorem \ref{theorem_positive_NW}is independent from Bourgain's result in the sense that our construction of the avoiding set is explicit and of full Lebesgue measure, while the set constructed by Bourgain was not explicit and the Lebesgue measure is not known. Therefore, we still cannot determine if all above Cantor sets are generically universal if we merely use Bourgain's result. } 
\end{remark}

\section{Generic non-universality of Cantor sets on ${\mathbb R}^d$.} \label{sec:generic-d}

We now turn to our results in higher dimensions. Our  first goal is  to show that some obvious examples cannot be generically universal. They include a set with  a path-connected component and embedding a lower dimensional generically non-universal set into higher dimensions.  

\medskip

\begin{proposition}\label{prop_path-connected}
    If $X\subset \R^d$ contains a path connected component, then $X$ is not generically universal.
\end{proposition}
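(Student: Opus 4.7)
The plan is to exhibit a single dense $G_\delta$ set $G\subset\R^d$ with $m(\R^d\setminus G)=0$ whose complement meets every non-degenerate continuum in $\R^d$. Interpreting the hypothesis as providing a path-connected subset $Y\subset X$ with at least two distinct points $p_0\ne q_0$ (the alternative reading being vacuous, since every singleton is a path-component of itself), there is a continuous path $\gamma\colon[0,1]\to Y$ with $\gamma(0)=p_0$ and $\gamma(1)=q_0$, and its image $K:=\gamma([0,1])\subset X$ is a compact connected set with more than one point. For every $t\in\R^d$ and every invertible linear $T$, the affine copy $t+T(K)\subset t+T(X)$ is again compact, connected and contains the two distinct points $t+T(p_0)$ and $t+T(q_0)$; hence it suffices to produce a single generic $G$ that contains no such non-degenerate continuum.

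For the construction, I would fix a countable dense sequence $\{e_n\}_{n\in\N}\subset S^{d-1}$ and, for each $n$, a countable dense sequence $\{c_{n,m}\}_{m\in\N}\subset\R$, and set
$$
F\;:=\;\bigcup_{n,m\in\N}\bigl\{x\in\R^d:\langle x,e_n\rangle=c_{n,m}\bigr\},\qquad G:=\R^d\setminus F.
$$
Each term in the union is a closed affine hyperplane of $d$-dimensional Lebesgue measure zero, so $F$ is an $F_\sigma$ null set and $G$ is a dense $G_\delta$ with $m(\R^d\setminus G)=m(F)=0$, as required of a generic set.

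The remaining step is to verify that $F$ intersects every connected set with more than one point, and this is where the intermediate value theorem enters. Given such a set $K'$ and distinct $p,q\in K'$, the directions perpendicular to $q-p$ form a proper great subsphere of $S^{d-1}$, so density of $\{e_n\}$ produces an index $n$ with $\langle q-p,e_n\rangle\ne 0$. The continuous linear functional $x\mapsto\langle x,e_n\rangle$ carries the connected set $K'$ to a connected subset of $\R$, i.e.\ an interval, which contains the unequal values $\langle p,e_n\rangle$ and $\langle q,e_n\rangle$ and is therefore non-degenerate; density of $\{c_{n,m}\}_m$ in $\R$ then supplies some $c_{n,m}$ lying inside this interval, giving a point of $K'\cap F$. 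Specializing to $K':=t+T(K)$ for an arbitrary affine copy $t+T(X)$ yields $(t+T(X))\cap F\ne\emptyset$, so no affine copy of $X$ can sit inside $G$, and $X$ is not generically universal. Nothing in this argument is seriously delicate; the only point that needs any care is the existence of an $e_n$ not perpendicular to $q-p$, which is transparent from density of the sphere against a lower-dimensional subsphere, and the measure/category bookkeeping for a countable union of hyperplanes is routine.
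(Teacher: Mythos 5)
Your proof is correct and follows essentially the same strategy as the paper's: take for $G$ the complement of a countable, dense, measure-zero family of hyperplanes, and use the intermediate value theorem on a linear functional to force any non-degenerate continuum (hence any affine copy of the path) to meet one of them. The paper's version removes only hyperplanes orthogonal to the $d$ coordinate directions at rational heights---which already suffices, since two distinct points must differ in some coordinate---so your dense family of directions in $S^{d-1}$ is a harmless elaboration of the same idea.
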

\begin{proof}
    Let us consider the dense $G_{\delta}$ set that removes all the hyperplanes parallel that correspond to the coordinate hyperplanes shifted by rationals:
    $$
    G =  \bigcap_{i=1}^d \bigcap_{r \in \Q} {\mathbb R}^d \setminus\left\{(x_1,...,x_d)\in{\mathbb R}^d:x_i = r \right\}.
    $$
    This is clearly a dense $G_{\delta}$ set and $m (\R^d\setminus G)=0$ since there are only countably many hyperplanes and hyperplanes have $d$-dimensional Lebesgue measure zero. 
    Consider any affine copy of $X$. Then this affine copy must contain a path $L$.   The projection of $L$ onto the coordinate axes will be non-degenerate on some interval for at least one of the axes.  Call this the $i$-th axis.  This interval will contain a rational number $r$.  Therefore $L$ will intersect with the coordinate plane, $x_i = r$.
    In other words this dense $G_{\delta}$ cannot contain $L$. Thus, $X$ cannot be topologically universal.   
\end{proof}

The following simple lemma is needed in the following proofs. 

\begin{lemma}\label{lemma_product}
Let $G_1$ and $G_2$ be two dense $G_{\delta}$ sets in ${\mathbb R}^{d_1}$  and ${\mathbb R}^{d_2}$ respectively. Then $G_1\times G_2$ is a dense $G_{\delta}$ set in ${\mathbb R}^{d_1+d_2}$.   
\end{lemma}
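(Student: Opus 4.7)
The plan is to realize $G_1 \times G_2$ itself as a countable intersection of open dense subsets of ${\mathbb R}^{d_1+d_2}$ and then invoke the Baire Category theorem to obtain density. This is the standard way one shows that dense $G_\delta$ behavior is preserved under finite products, and it should go through without any real obstacle.

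Concretely, I would begin by using the $G_\delta$ hypothesis to write $G_1 = \bigcap_{n=1}^\infty U_n$ with each $U_n$ open in ${\mathbb R}^{d_1}$ and $G_2 = \bigcap_{n=1}^\infty V_n$ with each $V_n$ open in ${\mathbb R}^{d_2}$. Since $G_1 \subset U_n$ and $G_1$ is dense, each $U_n$ is open dense in ${\mathbb R}^{d_1}$; likewise each $V_n$ is open dense in ${\mathbb R}^{d_2}$. The key algebraic observation is the identity
$$G_1 \times G_2 = \left(\bigcap_{n=1}^\infty (U_n \times {\mathbb R}^{d_2})\right) \cap \left(\bigcap_{n=1}^\infty ({\mathbb R}^{d_1} \times V_n)\right),$$
which exhibits $G_1 \times G_2$ as a countable intersection of open sets in ${\mathbb R}^{d_1+d_2}$; in particular it is $G_\delta$.

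For density I would note that if $U \subset {\mathbb R}^{d_1}$ is dense, then $U \times {\mathbb R}^{d_2}$ is dense in ${\mathbb R}^{d_1+d_2}$: any nonempty open box $B_1 \times B_2$ meets $(U \times {\mathbb R}^{d_2})$ since $B_1 \cap U \ne \emptyset$. Thus every cylinder $U_n \times {\mathbb R}^{d_2}$ and ${\mathbb R}^{d_1} \times V_n$ appearing in the intersection above is open and dense in ${\mathbb R}^{d_1+d_2}$. Applying the Baire Category theorem in the complete metric space ${\mathbb R}^{d_1+d_2}$ then yields density of the countable intersection, completing the proof. The only mildly subtle point is the cylinder density step, but it is immediate from the product topology of ${\mathbb R}^{d_1+d_2}$, so no genuine difficulty arises.
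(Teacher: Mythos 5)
Your proof is correct and follows essentially the same route as the paper: both exhibit $G_1\times G_2$ as a countable intersection of open dense cylinders/products (your decomposition $\bigcap_n (U_n\times\R^{d_2})\cap\bigcap_n(\R^{d_1}\times V_n)$ is equivalent to the paper's $\bigcap_{n,m}O_n\times O_m'$). Your version is slightly more careful in spelling out why the factors may be taken open and dense and why the cylinders are dense, but there is no substantive difference.
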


\begin{proof}
Suppose that we write $G_1 = \bigcap_{n=1}^{\infty} O_n$ and $G_2 = \bigcap_{n=1}^{\infty} O_n'$ where $O_n$ and $O_n'$ are open dense sets in ${\mathbb R}^{d_1}$ and ${\mathbb R}^{d_2}$ respectively. The lemma follows immediately by observing that 
$$
G_1\times G_2 = \bigcap_{n=1}^{\infty} \bigcap_{m=1}^{\infty} O_n\times O_m'.
$$
\end{proof}

\medskip

\begin{proposition}\label{prop_embedding}
   Let $0<k<d$ be two positive integers. Suppose that  $E\subset {\mathbb R}^k$ is generically non-universal in ${\mathbb R}^k$. Then $E\times \{0\}$ cannot be generically universal in ${\mathbb R}^d$  ($0$ here is the $d-k$ dimensional zero vector).
\end{proposition}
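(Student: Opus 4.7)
The plan is to pull back the generic witness set from $\R^k$ to $\R^d$ using every coordinate projection onto a $k$-dimensional coordinate subspace, and then exploit the fact that any $k$-plane in $\R^d$ projects injectively onto at least one such subspace.

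By hypothesis, there exists a dense $G_{\delta}$ set $G_0 \subset \R^k$ with $m(\R^k \setminus G_0)=0$ that contains no affine copy of $E$. For each $k$-element subset $S\subset\{1,\dots,d\}$, let $\pi_S:\R^d\to\R^k$ denote the coordinate projection onto the coordinates in $S$, and define
$$
G_S := \pi_S^{-1}(G_0).
$$
Up to a permutation of coordinates, $G_S = G_0 \times \R^{d-k}$, so Lemma~\ref{lemma_product} shows that $G_S$ is a dense $G_{\delta}$ subset of $\R^d$; moreover $\R^d\setminus G_S = (\R^k\setminus G_0)\times\R^{d-k}$ is $d$-dimensional Lebesgue null (a null set crossed with $\R^{d-k}$ is null, by a standard Fubini-type argument). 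I would then set
$$
G := \bigcap_{|S|=k} G_S,
$$
a \emph{finite} intersection of $\binom{d}{k}$ dense $G_{\delta}$ sets with null complements; it is itself a dense $G_{\delta}$ with $m(\R^d\setminus G)=0$.

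To finish, I would argue by contradiction: suppose some affine copy $t+T(E\times\{0\})$ of $E\times\{0\}$ is contained in $G$, where $T$ is an invertible linear map on $\R^d$. Set $L := T(\R^k\times\{0\})$, a $k$-dimensional linear subspace of $\R^d$. The key linear-algebra observation is that $L$ projects injectively onto at least one coordinate $k$-subspace: choose any basis of $L$ and form the resulting $d\times k$ matrix of rank $k$; it must contain an invertible $k\times k$ minor, whose row indices give the required index set $S$. For this $S$, the composition $A := \pi_S\circ T\big|_{\R^k\times\{0\}} : \R^k\to\R^k$ is an invertible linear map, so applying $\pi_S$ to the inclusion $t+T(E\times\{0\}) \subseteq G \subseteq G_S$ yields
$$
\pi_S(t) + A(E) \;=\; \pi_S\bigl(t+T(E\times\{0\})\bigr) \;\subseteq\; G_0,
$$
an affine copy of $E$ inside $G_0$, contradicting the choice of $G_0$. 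Hence $E\times\{0\}$ is not generically universal in $\R^d$.

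The only substantive point is the projection lemma in the last paragraph (every $k$-plane projects isomorphically onto some coordinate $k$-plane); everything else is bookkeeping about products of dense $G_{\delta}$ sets and about the product of a null set with $\R^{d-k}$ being null. I do not expect any serious obstacle; the main thing to get right is to use \emph{all} $\binom{d}{k}$ projections, since using only one (say the projection onto the first $k$ coordinates) would fail to rule out affine copies lying on $k$-planes that project degenerately there.
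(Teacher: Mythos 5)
Your proof is correct and follows essentially the same route as the paper's: intersect pullbacks of $G_0$ over all $\binom{d}{k}$ coordinate $k$-planes, then use the rank-$k$ matrix / invertible minor argument to project the putative affine copy back into $G_0$. The only (harmless, slightly cleaner) difference is that you use $G_0\times\R^{d-k}$ where the paper uses $G_0\times G_0'$ for an auxiliary generic set $G_0'$ in $\R^{d-k}$; both give a dense $G_\delta$ with null complement.
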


\begin{proof}
Let ${\bf e}_i$ be the canonical coordinate basis in ${\mathbb R}^d$ and let $W ={\mathbb R}^k\times \{0\}.$ By our assumption, we can find a dense $G_{\delta}$ set $G_0\subset{\mathbb R}^k$ such that it does not contain $k$-dimensional affine copies of $E$. Let $G_0'$ be any dense $G_{\delta}$ set in ${\mathbb R}^{d-k}$ with $m ({\mathbb R}^{d-k}\setminus G_0') = 0$.  Then $G_0\times G_0'$ is a dense $G_{\delta}$ in ${\mathbb R}^d$.
By Fubini's theorem, $m\left((\R^k\setminus G_0)\times \R^{d-k}\right) = 0$, so is the other union.  We let $\Pi_{d,k}$ be the collection of all $k$-dimensional coordinate planes in ${\mathbb R}^d$. There are $\binom{d}{k}$ such planes. For each $P\in\Pi_{d,k}$, there exists a permutation matrix $\sigma_P$ such that 
$$
P = \sigma_P(W)
$$
We now define 
$$
G = \bigcap_{P\in\Pi_{d,k}} \sigma_P (G_0\times G_0').
$$
Note that 
$$
\R^d\setminus(G_0\times G_0') =\left( (\R^k\setminus G_0)\times \R^{d-k}\right)\cup\left( \R^k\times (\R^{d-k}\setminus G_0')\right).
$$
By Fubini's theorem, $m\left( (\R^k\setminus G_0)\times \R^{d-k}\right) = 0$, so is the other set in the above union.  
As $\sigma_P$ has unit determinant, we obtain that  $m(\R^d\setminus G) = 0$.

\medskip

To finish the proof, our next step is to show that $G$ cannot contain any affine copies of $E\times\{0\}$. To see this, we argue by contradiction. Suppose that there exists an invertible linear transformation $T$ on ${\mathbb R}^d$ such that $t+ T(E)\subset G$. Then the subspace
$$
T(W) = \mbox{span}\{T{\bf e}_1,..., T{\bf e}_k\}. 
$$
is $k$-dimensional and $\{T{\bf e}_1,..., T{\bf e}_k\}$ forms a basis for $T(W)$. Putting $T$ in matrix representation under the canonical basis. The matrix
$$
A = \left(\begin{array}{ccc}
    | & \cdots & | \\
    T{\bf e}_1 & \cdots & T{\bf e}_k\\
    | & \cdots &|
\end{array}\right)
$$
is of column rank $k$. Hence, it has row rank $k$ as well. Therefore, there exists $k$-linearly independent row vectors. Let ${\mathcal I} = \{i_1,...,i_k\}$ be the position of the row vectors of $A$ for which they are linearly independent. Let $A_{{\mathcal I}}$ be the square matrix whose rows are exactly the rows of $A$ at positions in ${\mathcal I}$. Then $A_{\mathcal I}$ is invertible on ${\mathbb R}^k$. 
Moreover, if we consider the $k$-dimensional coordinate plane $P$ at those $x_{i_1},...,x_{i_k}$ axes and denote by $P_{\mathcal I}$ the orthogonal projection onto $P$, then we have
$$
P_{\mathcal I}(t+T(E)) = P_{\mathcal I}(t)+ A_{\mathcal I}(E)
$$ 
and 
$$
P_{\mathcal I} (\sigma_P (G_0\times G_0')) = G_0.
$$
By the construction of $G$, $t+T(E)\subset \sigma_P (G_0\times G_0')$, meaning that $P(t)+ A_{\mathcal I}(E)\subset G_0$. As $A_{\mathcal I}$ is invertible, we find an affine copy of $E$ inside $G_0$, which is a contradiction. This completes the proof. 

\end{proof}

As we know already that the middle-third Cantor set is not generically universal, the above proposition shows that it cannot be embedded to become generically universal in higher dimensions either. Notice also that such an embedding of a Cantor set will never be Newhouse projectively thick since the projection will always be a singleton in the orthogonal complement. We are now ready to prove our main theorem on ${\mathbb R}^d$ stated in the introduction. 

\medskip

\medskip

\noindent{\bf Proof of Theorem \ref{theorem_high_question}.}  Suppose we have a Newhouse projectively thick Cantor set $J$ on $\R^d$.  We now take $G_0$ in Theorem \ref{theorem_positive_NW} and construct 
$$
G = G_0\underbrace{\times \dots \times}_\text{$d$-times}G_0. 
$$
Applying Lemma \ref{lemma_product}, $G_0\times \dots \times G_0 $ is a dense $G_\delta$ set in ${\mathbb R}^d$ and therefore $G$ is also a dense $G_\delta$ set. With Fubini's theorem, it is not difficult to show that ${\mathbb R}^d\setminus G$ has zero Lebesgue measure. 
 
 \medskip

It remains to prove that $G$ has no affine copy of $J$.   Assume to the contrary that $G$ contains an affine copy of $J$ and denote it by $t+T(J)$. Then
$$
t+T(J) \subset G_0\underbrace{\times \dots \times}_\text{$d$-times} G_0.
$$
Denote by $P$ the orthogonal projection onto the $x_1$-axis. We have $P [t+T(J)] \subset G_0$. By linearity we can express the orthogonal projection $P [t+T(J)]  $ as $ P(t)+ P[T(J)]$. We have that $G_0$ contains an affine copy of $P[T(J)]$. But $J$ is Newhouse projectively thick which implies that  $\tau (P[T(J)])>0$.  We obtain a contradiction since, by Theorem \ref{theorem_positive_NW}, $G_{0}$ cannot contain any affine copies of ${ P}[T(J)]$.  This completes the proof. \qquad$\Box$

\medskip

To conclude this paper, we consider a class of self-similar sets that are Newhouse projectively thick. Recall that if we are given finitely many contractive similarity maps $\phi_i: {\mathbb R}^d\to {\mathbb R}^d$, $i=1,...,N$, such that 
$$
\phi_i(x) = \rho_i O_i x+b_i
$$
where $0<\rho_i<1$, $O_i$ is an orthogonal transformation and $b_i\in{\mathbb R}^d$, $\Phi = \{\phi_i: i=1,...,N\}$ forms an {\bf iterated function system (IFS)} and there exists a unique non-empty compact set $K = K_{\Phi}$ such that 
$$
K = \bigcup_{i=1}^{N}\phi_i (K).
$$
We say that the IFS is {\bf rotation-free} if all $O_i$ are identity transformations. We also say that a self-similar set is {\bf non-degenerate} if it is not contained in any hyperplane of ${\mathbb R}^d$

\begin{example}
All non-degenerate self-similar sets on ${\mathbb R}^d$ generated by rotation-free IFS must be Newhouse projectively thick. 
\end{example}
\begin{proof}
Let $P$ be the orthogonal projection onto the $x_1$-axis and let $T$ be any invertible linear transformation. We note that for a rotation free IFS, the set $PT(K)$ is still  generated by a self-similar IFS on ${\mathbb R}^1$ with maps
$$
\widetilde{\phi}_i(x) = \rho_i x+ PT(b_i). 
$$
Notice that the self-similar set is non-degenerate, meaning that $PT(K)$ is not a singleton. The self-similar set $PT(K)$ is a compact perfect set. 
In Feng and Wu \cite[Lemma 3.5]{FengWu2022}, the authors showed that all self-similar sets not lying on a hyperplane have a positive thickness $\tau_{FW}$ defined in \cite[Definition 1.1]{FengWu2022}.  On the other hand,  it was claimed without proof in the paragraph after Definition 1.1 in \cite{FengWu2022} that  if $d = 1$, then $\tau_{FW}(E)>0$ if and only if the Newhouse thickness $\tau(E)>0$. This would have implied that $\tau(PT(K))>0$. 

\medskip

For the self-containment of this paper, we justify the direction required in this proof in the following claim:

\medskip
 
{\bf Claim:} If $d=1$, then $\tau_{FW}(E)>0$ implies that $\tau(E)>0$. 

\medskip

To see this claim, Definition 1.1 in \cite{FengWu2022} states that 
$$
\tau_{FW} (E) = \sup \left\{c\ge 0: \forall x\in E, \forall r\in(0,|E|], \exists \ y\in \R \ \mbox{s.t.} \  \mbox{conv}(B(x,r)\cap E)\supset B(y,cr)\right\}.
$$
Here, $|E|$ is the diameter of $E$, conv$(K)$ means the convex hull of a set $K$, and $B(x,r)$ denotes the Euclidean ball centered at $x$ of radius $r$. For each fixed $x\in E$ and $r\in (0,|E|]$, we define 
$$
\tau_{FW} (E, x, r) = \sup \left\{c\ge 0:  \exists \ y\in \R \ \mbox{s.t.} \  \mbox{conv}(B(x,r)\cap E)\supset B(y,cr)\right\}.
$$
Then $\tau_{FW} (E) = \inf\limits_{x\in E}\inf\limits_{r\in(0,|E|]}\tau_{FW} (E,x, r)$.

Suppose that $\tau_{FW}(E)>0$. Consider $u\in{\mathcal U}$ (using the notation as in Definition \ref{NWthick}) where $u$ is a boundary point of a bounded gap $U$. Consider the open interval $(u-|U|, u+|U|)$. Then one of the endpoints of $\mbox{conv}(B(u,|U|)\cap E)$ is $u$.  Let $C$ be the bridge of $u$ and without loss of generality assume $C$ is on the right hand  side of $U$.   

If $u+|U|\in C$, then  $|C|\ge |U|$ and $\tau (E, u)\ge 1$.  If, however, $u+|U|\not\in C$, then because of the definition of the bridge, $u+|U|$ is in the first gap whose length is larger than $|U|$. Hence,   $\mbox{conv}(B(u,|U|)\cap E) = C = B(z, \frac{|C|}{2|U|}|U|)$ for some center $z$. This means that $\tau_{FW} (E, u, |U|) = \frac{|C|}{2|U|}$.  Then $$\tau (E,u) = \frac{|C|}{|U|}= 2 \cdot \tau_{FW} (E, u, |U|)\ge 2 \cdot \tau_{FW}(E)>0.$$ Taking infimum among all $u\in {\mathcal U}$, we show that $\tau(E)\ge \min\{2 \tau_{FW}(E),1\}>0$. This completes the proof of the claim. 


Coming back to the proof, we now know that all self-similar sets, not a singleton, on ${\mathbb R}^1$ must have a positive Newhouse thickness. So the self-similar set $PT(K)$ has a positive Newhouse thickness. This shows that all non-degenerate self-similar sets generated by rotation-free IFS must be Newhouse projectively thick. 
\end{proof}

\end{document}